\definecolor{purple}{rgb}{0.8,0.01,0.7}
\theoremstyle{plain}
\newtheorem{thm}{Theorem}[section]
\newtheorem{lem}[thm]{Lemma}
\newtheorem{prop}[thm]{Proposition}
\theoremstyle{definition}
\newtheorem{defn}[thm]{Definition}
\newtheorem{example}[thm]{Example}
\newtheorem{rem}[thm]{Remark}
\numberwithin{equation}{section}
\def\rightharpoonupfill@{\arrowfill@\relbar\relbar\rightharpoonup}
\newcommand{\xrightharpoonup}[2][]{\ext@arrow 0359\rightharpoonupfill@{#1}{#2}}
\newcommand{\N}{\mathbb N} 
\newcommand{\R}{\mathbb R} 
\newcommand{\CC}{\mathbb C} 
\newcommand{\Ms}{{\mathbb M}^{n{\times}n}_{sym}}
\newcommand{\MD}{{\mathbb M}^{n{\times}n}_D}
\renewcommand{\div}{\operatorname{div}}
\newcommand{\wto}{\rightharpoonup}
\newcommand{\e}{\varepsilon}
\newcommand{\A}{{\mathcal A}}
\newcommand{\Q}{{\mathcal Q}}
\newcommand{\LL}{{\mathcal L}}
\newcommand{\HH}{{\mathcal H}}
\newcommand{\M}{{\mathcal M}}
\newcommand{\C}{{\mathcal C}}
\newcommand{\K}{{\mathcal K}}
\newcommand{\V}{{\mathcal V}}
\let\O=\Omega
\definecolor{purple}{rgb}{0.8,0.01,0.7}
\begin{document}
 
\title[A note on the derivation of rigid-plastic models]{A note on the derivation of rigid-plastic models}
\author[J.-F. Babadjian]{Jean-Fran\c cois Babadjian}
\author[G. A. Francfort]{Gilles A. Francfort}

\address[J.-F. Babadjian]{Sorbonne Universit\'es, UPMC Univ Paris 06, CNRS, UMR 7598, Laboratoire Jacques-Louis Lions, F-75005, Paris, France}
\email{jean-francois.babadjian@upmc.fr}

\address[G.A. Francfort]{Universit\'e Paris-Nord, LAGA, Avenue J.-B. Cl\'ement, 93430 - Villetaneuse, France \& Institut Universitaire de France}
\email{gilles.francfort@univ-paris13.fr}

\date{\today}

\subjclass{}
\keywords{Plasticity, Functions of bounded deformation, Calculus of variations}

\begin{abstract} This note is devoted to a rigorous derivation of rigid-plasticity as the limit of elasto-plasticity when the elasticity tends to infinity.
\end{abstract}

\maketitle


\section{Introduction}

Small strain elasto-plasticity is formally modeled as follows. Consider a homogeneous elasto-plastic material occupying a volume $\O \subset \R^n$ with Hooke's law (elasticity tensor) $\CC$. Assume that the body is subjected to a time-dependent loading process during a time interval $[0,T]$ with, say, $f(t)$ as body loads, $g(t)$ as surface loads  on a part $\Gamma_N$ of $\partial\O$, and $w(t)$ as displacement loads (hard device) on the complementary part $\Gamma_D$ of $\partial\O$. Denoting by $Eu(t)$ the infinitesimal strain at $t$, that is, the symmetric part of the spatial gradient of the displacement field $u(t)$ at $t$, small strain elasto-plasticity requires that $Eu(t)$ decompose additively as
$$Eu(t)=e(t)+p(t) \text{ in $\O$,\; with }u(t)=w(t) \text{ on }\Gamma_D$$
where $e(t)$ is the elastic strain and $p(t)$ the plastic strain. The elastic strain is related to the stress tensor $\sigma(t)$ through the constitutive law of linearized elasticity $\sigma(t)=\CC e(t)$. In a quasi-static setting, the equilibrium equations read as
$$
\div\sigma(t)+ f(t) \quad \text{in }\O,  \quad \sigma(t)\nu = g(t) \text{ on }\Gamma_N,$$
where $\nu$ denotes the outer unit normal to $\partial\O$. In plasticity, the stresses are constrained to remain below a yield stress at which permanent strains appear. 
Specifically, the deviatoric stress $\sigma_D(t)$ must belong to a fixed compact and convex subset $K$ of the deviatoric (trace free) matrices
$$\sigma_D(t) \in K.$$
If $\sigma_D(t)$ lies inside the interior of $K$, the material behaves elastically ($p(t)=0$). On the other hand, if $\sigma_D(t)$ reaches the boundary of $K$ (called the yield surface), a plastic flow may develop, so that, after unloading, there will remain a non-trivial permanent plastic strain $p(t)$.  Its evolution is described by the so-called flow rule
$$\dot p(t) \in N_K(\sigma_D(t))$$ 
where $N_K(\sigma_D(t))$ is the normal cone to $K$ at $\sigma_D(t)$. By arguments of convex analysis, the flow rule can be equivalently written as Hill's principle of maximum plastic work
$$\sigma_D(t){\,:\,}\dot p(t)=\max_{\tau_D \in K}Ê\tau_D{\,:\,}\dot p(t)=:H(\dot p(t)),$$
where $H$ is the support function of $K$, and $H(\dot p(t))$ identifies with the plastic dissipation.

In this self-contained note, we propose to show that rigid plasticity -- that is the model where one formally sets $\CC=\infty$ (and correspondingly $\dot p(t)=E\dot u(t), \; \div \dot u(t)=0$) in the system above -- can be derived as an asymptotic limit of small strain elasto-plasticity as $\CC$ actually gets larger and larger.  Rigid-plastic models are particularly useful in order to compute analytical solutions in a plane-strain setting. Indeed, inside the plastic zone, the stress equations can  be  formally written as a non-linear hyperbolic system which is solved by the method of characteristics. The family of characteristics are the so-called {\it slip lines} along which some combinations of the stress remain constants, while the tangential velocities can jump. It thus seems appropriate to  rigorously derive rigid-plasticity in order to investigate the hyperbolic structure of the equations. However, this later task falls outside the scope of the present work.

\medskip

Notationwise, we denote by $\Ms$ the set of symmetric $n \times n$ matrices. If $A$ and $B \in \Ms$, we use the Euclidean scalar product $A:B:=\operatorname{tr}(AB)$ and the associated Euclidean norm $|A|:=\sqrt{A:A}$. The subset $\MD$ of $\Ms$ stands for trace free symmetric matrices. If $A \in \Ms$, it can be orthogonally decomposed as 
$$A=A_D+\frac{\operatorname{tr}A}{n}I,$$
where $A_D \in \MD$, and $I$ is the identity matrix in $\R^n$. The notation $\odot$ stands for the symmetrized tensor product between vectors in $\R^n$, {\it i.e.}, if $a$ and $b \in \R^n$, $(a\odot b)_{ij}=(a_ib_j+a_jb_i)/2$ for all $1 \leq i,j\leq n$. Note in particular that $\frac{1}{\sqrt 2}|a| |b|Ê\leq |a\odot b|Ê\leq |a| |b|$.

\smallskip

The Lebesgue measure in $\R^n$ and the $(n-1)$-dimensional Hausdorff measure are denoted by $\LL^n$ and $\HH^{n-1}$, respectively.
Given a locally compact set $E \subset \R^n$ and a Euclidean space $X$, we denote by $\M(E;X)$ (or simply $\M(E)$ if $X=\R$) the space of bounded Radon measures on $E$ with values in $X$, endowed with the norm $\|\mu\|_{\M(E;X)}:=|\mu|(E)$, where $|\mu|\in \M(E)$ is the variation of the measure $\mu$.  Moreover, if $\nu$ is a non-negative Radon measure over $E$, we denote by ${d\mu}/{d\nu}$ the Radon-Nikodym derivative of $\mu$ with respect to $\nu$.

 \smallskip

We use standard notation for Lebesgue and Sobolev spaces. In particular, for $1\leq p\leq \infty$, the $L^p$-norms of the various quantities are denoted by $\| \cdot\|_p$. 
If $U \subset \R^n$ is an open set, the space $BD(U)$ of functions of bounded deformation in $U$ is made of all functions $u \in L^1(U;\R^n)$ such that $Eu\in \M(U;\Ms)$, where $Eu:=(Du+Du^T)/2$ and $Du$ is the distributional derivative of $u$. We refer to \cite{T} for general properties of this space. Finally, $H(\div,U)$ stands for the Hilbert space of all $\tau \in L^2(U;\Ms)$ such that $\div \tau \in L^2(U;\R^n)$.

\section{The elasto-plastic model}

\medskip

We now consider a homogeneous elasto-plastic material with Hooke's law  given by a fourth order tensor $\CC$ satisfying the usual symmetry properties 
\begin{equation}\label{eq:C1}
\CC_{ijkl}=\CC_{jikl}=\CC_{klij}, \quad \text{Êfor all } 1 \leq i,j,k,l \leq n,
\end{equation}
and the growth and coercity assumptions
\begin{equation}\label{eq:C2}
\alpha |\xi|^2 \leq \CC \xi : \xi \leq \beta |\xi|^2, \quad \text{ for all }\xi \in \Ms,
\end{equation}
where $\alpha$ and $\beta>0$.

It occupies the domain $\O$, a bounded and connected open subset of  $\R^n$ with at least Lipschitz boundary (see Definition \ref{admiss}) and outer normal $\nu$. Its boundary $\partial \O$ is split into the union of a Dirichlet part $\Gamma_D$ which is non empty and open in the relative topology of $\partial \O$, a Neumann part $\Gamma_N:=\partial \O \setminus \overline{\Gamma_D}$, and their common relative boundary denoted by $\partial_{\lfloor_{\partial \O}} \Gamma_D$.

Standard plasticity is characterized by the fact that the deviatoric stress is constrained to stay in a fixed compact and convex subset $K \subset \MD$ of deviatoric matrices. We further assume that 
\begin{equation}\label{eq:K}
B(0,c_*) \subset K \subset B(0,c^*),
\end{equation}
where $0<c_*<c^*<\infty$, and denote by
$$\K:=\{\sigma \in L^2(\O;\Ms) : \sigma_D(x) \in K \text{ for a.e. }x \in \O\}.$$

The support function of $K$, defined for any $p \in \MD$ by
$H(p):=\sup_{\tau \in K} \tau:p$, satisfies, according to \eqref{eq:K}, 
$$c_*|p|Ê\leq H(p) \leq c^*|p|, \quad \text{ for all }p \in \Ms.$$

On the Dirichlet part $\Gamma_D$ of the boundary,  the body is subjected to a hard device, {\it i.e.}, a boundary displacement which is the trace on $\Gamma_D$ of a function $w \in AC([0,T];H^1(\O;\R^n))$. In addition, the body is subjected to two types of forces: bulk forces $f \in AC([0,T];L^n(\O;\R^n))$, and surface forces $g \in AC([0,T];L^\infty(\Gamma_N;\R^n))$, the latter acting on the Neumann part $\Gamma_N$ of the boundary. It is classical to assume a uniform safe load condition (see \cite{Suquet}) which ensures the existence of a plastically, as well as statically  admissible state of stress $\pi$  associated with the pair $(f,g)$. Specifically,  there exists $\pi \in AC([0,T];L^2(\O;\Ms))$  and some safety parameter $c>0$ such that 
$$\begin{cases}\pi_D(t,x)+B(0,c) \subset K \text{ for a.e. }x \in \O \text{ and all }t \in [0,T]\\[2mm]
\div\pi(t)+f(t)=0 \text{ in }Ê\O, \quad \pi(t)\nu=g(t) \text{ on }\Gamma_N.\end{cases}$$

Given a boundary datum $\hat w \in H^1(\O;\R^n)$, we define the space of all kinematically admissible triples as
\begin{multline*}
\A(\hat w):=\{(u,e,p) \in BD(\O) \times L^2(\O;\Ms) \times \M(\O \cup \Gamma_D;\MD) : \\
Eu=e+p \text{ in }\O,\; p=(\hat w-u)Ê\odot \nu \text{ on } \Gamma_D\},
\end{multline*}
where we still denote by $u$ the trace of $u$ on $\partial \O$ (see \cite{B}). We also define the space of all statically admissibles stresses as
$$\Sigma:=\{\sigma \in L^2(\O;\Ms) : \operatorname{div}\sigma\in L^n(\O;\R^n), \; \sigma\nu\in L^\infty(\Gamma_N;\R^n),\; \sigma_D \in L^\infty(\O;\MD)\},$$
where $\sigma\nu$ is the normal trace of $\sigma \in H(\operatorname{div},\O)$ which is well defined as an element of $H^{-1/2}(\Gamma_N;\R^n)$, the dual space of $H^{1/2}_{00}(\Gamma_N;\R^n)$.

Following \cite[Section 6]{FG}, we introduce the following class of domains  for which  a  meaningful duality pairing between stresses and strains can be defined. Note that the class contains in particular $\C^2$-domains \cite{KT}, as well as hypercubes where $\Gamma_D$ is one of its faces \cite[Section 6]{FG}.
\begin{defn}\label{admiss}
We say that $\O$ is admissible if for any $\sigma \in \Sigma$, and any $p \in \M(\O \cupÊ\Gamma_D;\MD)$, with $(u,e,p) \in \A(\hat w)$ for some $\hat w \in H^1(\O;\R^n)$, $u \in BD(\O)$ and $e \in L^2(\O;\Ms)$, the distribution defined for all $\varphi \in \C^\infty_c(\R^n)$ by
\begin{multline*}
\langle [\sigma_D:p],\varphi\rangle:=\int_\O \varphi \sigma :(E\hat w-e)\, dx - \int_\O \varphi \operatorname{div}\sigma \cdot (u-\hat w)\, dx \\
- \int_\O \sigma :[(u-\hat w)\odot \nabla \varphi]\, dx + \int_{\Gamma_N} \varphi \sigma\nu \cdot (u-\hat w)\, d\HH^{n-1}
\end{multline*}
extends to a bounded Radon measure in $\R^n$ with $|[\sigma_D:p]| \leq \|\sigma_D\|_\infty |p|$. In this case, its mass is given by
\begin{equation}\label{eq:duality}
\langle \sigma_D,p\rangle:=\langle [\sigma_D:p],1\rangle=\int_\O  \sigma :(E\hat w-e)\, dx - \int_\O  \operatorname{div}\sigma \cdot (u-\hat w)\, dx + \int_{\Gamma_N} \sigma\nu \cdot (u-\hat w)\, d\HH^{n-1}.
\end{equation}
\end{defn}

For any $e \in L^2(\O;\Ms)$,  the elastic energy is
$$\Q(e)=\frac12 \int_\O \CC e:e\, dx,$$
while, for any $p \in \M(\O \cup \Gamma_D;\MD)$,  the dissipation energy is the convex functional of measure (see \cite{GS,DT})
$$\HH(p):=\int_{\O \cup \Gamma_D} H \left(\frac{dp}{d|p|} \right) d|p|.$$

If  $p: [0,T] \to\M(\O \cup \Gamma_D;\MD)$, we define the total dissipation between times $a$ and $b$ by
$$\V_\HH(p;[a,b]):=\sup\left\{\sum_{i=1}^N \HH(p(t_i) -p^\e(t_{i-1})) : N \in \N,\; a=t_0<t_1<\cdots<t_N=b \right\}.$$

If additionally $p \in AC([0,T];\M(\O \cup \Gamma_D;\MD))$, then \cite[Theorem 7.1]{DMDSM} shows that
$$\V_\HH(p;[a,b])=\int_a^b \HH(\dot p(s))\, ds.$$

We finally impose the following initial condition on the evolution: $(u_0,e_0,p_0) \in \A(w(0))$ with $\sigma_0:=\CC e_0$  such that
$$\div\sigma_0+f(0)=0  \text{ in }Ê\O, \quad \sigma_0 \nu=g(0) \text{ on }\Gamma_N, \quad (\sigma_0)_D \in \K.$$ 

The following existence result has been established in \cite{DMDSM,FG}.

\begin{thm}\label{thm:DMDSM}
Under the previous assumptions, there exist a quasi-static evolution, {\it i.e.} a mapping $t \mapsto (u(t),e(t),p(t))$ with the following properties
$$u \in AC([0,T];BD(\O)),\; \sigma, \; e \in AC([0,T];L^2(\O;\Ms)),\; p \in AC([0,T]; \M(\O \cup \Gamma_D;\MD)),$$
$$(u(0),e(0),p(0))=(u_0,e_0,p_0),$$
and for all $t \in [0,T]$,
$$
\begin{cases}
Eu(t)=e(t)+p(t) \text{ in }\O, \\
p(t)=(w(t)-u(t))\odot \nu \text{ on }Ê\Gamma_D,\\
\sigma(t)=\CC e(t) \text{ in }\O,
\end{cases}
$$

$$
\begin{cases}
\div\sigma(t)+f(t)=0 \text{ in }\O, \\
\sigma(t)\nu=g(t) \text{ on }\Gamma_N,\\
 \sigma_D(t) \in \K,
 \end{cases}
$$
and for a.e. $t \in [0,T]$,
\begin{equation}\label{eq:flow-rule}
H(\dot p(t))=[\sigma_D(t): \dot p(t)] \text{ in }Ê\M(\O \cup \Gamma_D;\MD).
\end{equation}
\end{thm}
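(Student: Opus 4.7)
The plan is to construct the quasi-static evolution by time discretization, following the classical incremental minimization scheme for rate-independent processes adapted to perfect plasticity. First I would fix a subdivision $0=t_0^N<t_1^N<\cdots<t_N^N=T$ of step size $\delta_N\to 0$ and, setting $(u_0^N,e_0^N,p_0^N):=(u_0,e_0,p_0)$, define recursively $(u_k^N,e_k^N,p_k^N)$ as a solution to the incremental minimization problem
$$
\min\Bigl\{\Q(e)+\HH(p-p_{k-1}^N)-\LL(t_k^N)(u)\;:\;(u,e,p)\in\A(w(t_k^N))\Bigr\},
$$
where $\LL(t)(u):=\int_\O f(t)\cdot u\,dx+\int_{\Gamma_N}g(t)\cdot u\,d\HH^{n-1}$. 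Existence of minimizers follows from the direct method, using the safe load condition together with \eqref{eq:K} and \eqref{eq:C2} to obtain coercivity of the functional on $\A(w(t_k^N))$ modulo the translation $u-w(t_k^N)$, and from lower semicontinuity of $\Q$ and $\HH$ in the weak-$L^2$ and weak-$*$ measure topologies respectively.

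Next I would derive the discrete equilibrium and yield conditions from the Euler--Lagrange analysis of the incremental problem: testing against kinematically admissible variations $(v,Ev,0)$ with $v=0$ on $\Gamma_D$ yields $\div\sigma_k^N+f(t_k^N)=0$ in $\O$ and $\sigma_k^N\nu=g(t_k^N)$ on $\Gamma_N$, with $\sigma_k^N:=\CC e_k^N$; testing against admissible plastic-strain variations together with the support-function characterization of $K$ yields $(\sigma_k^N)_D\in\K$ and the discrete flow rule. The uniform a priori bounds come from comparing the incremental value to the competitor $(u_{k-1}^N+(w(t_k^N)-w(t_{k-1}^N)), e_{k-1}^N+E(w(t_k^N)-w(t_{k-1}^N)), p_{k-1}^N)$: a discrete Gronwall argument using the absolute continuity of $t\mapsto w(t),f(t),g(t)$ and the safe-load stress $\pi$ produces estimates of the form
$$
\max_k\|e_k^N\|_2+\max_k\|u_k^N\|_{BD(\O)}+\sum_{k=1}^N\HH(p_k^N-p_{k-1}^N)\leq C,
$$
uniform in $N$.

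I would then pass to the limit via a Helly-type selection theorem for $BV$-valued measure-valued maps applied to the piecewise-constant interpolants $p^N(t)$, extracting a subsequence and a limit $p\in BV([0,T];\M(\O\cup\Gamma_D;\MD))$ with $\V_\HH(p;[0,t])\leq\liminf_N\V_\HH(p^N;[0,t])$; from the energy estimates one simultaneously obtains weak limits $e(t)\in L^2(\O;\Ms)$ and $u(t)\in BD(\O)$. The admissibility conditions, the equilibrium equations, and the yield constraint $\sigma_D(t)\in\K$ pass to the limit by weak closedness. The global energy inequality is obtained by summation of the incremental minimality inequalities followed by lower semicontinuity, while the opposite inequality follows from the convexity of $\Q$ and the duality pairing of Definition \ref{admiss}, giving an energy balance which forces $p\in AC([0,T];\M(\O\cup\Gamma_D;\MD))$ and then, by \cite[Theorem 7.1]{DMDSM}, the identity $\V_\HH(p;[a,b])=\int_a^b\HH(\dot p(s))\,ds$.

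The main obstacle I expect is establishing the flow rule \eqref{eq:flow-rule} as an equality of measures on $\O\cup\Gamma_D$, not merely as a scalar identity: the a priori inequality $H(\dot p(t))\geq[\sigma_D(t){:}\dot p(t)]$ is immediate from $\sigma_D(t)\in\K$ and the definition of $H$, whereas the converse requires identifying the full mass $\langle\sigma_D(t),\dot p(t)\rangle$ with the time-derivative of the energy via the duality formula \eqref{eq:duality}, which is precisely why the admissibility hypothesis on $\O$ in Definition \ref{admiss} is needed. Concretely, one differentiates the energy balance at a.e. $t$, recognizes the dissipation rate as $\int_0^t\HH(\dot p)\,ds$, and then localizes by testing the pairing $[\sigma_D(t){:}\dot p(t)]$ against arbitrary test functions $\varphi\in\C^\infty_c(\R^n)$; the control $|[\sigma_D{:}p]|\leq\|\sigma_D\|_\infty|p|$ combined with $\sigma_D\in\K$ yields the pointwise (in measure) identification $H(\dot p(t))=[\sigma_D(t){:}\dot p(t)]$.
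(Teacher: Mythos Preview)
Your sketch is a faithful outline of the incremental minimization scheme of \cite{DMDSM,FG}, and the steps you identify---existence of discrete minimizers via the safe-load condition, discrete Euler--Lagrange equations yielding equilibrium and the stress constraint, uniform bounds by comparison with the shifted previous state, Helly's selection, and the upgrade of the energy inequality to an equality via duality---are the correct ones. Note, however, that the paper does \emph{not} prove Theorem~\ref{thm:DMDSM}: it is quoted verbatim from the literature, with the sentence ``The following existence result has been established in \cite{DMDSM,FG}'' serving as the entire justification. So there is no proof in the paper to compare your proposal against; what you have written is essentially a compressed version of the argument in those references, and in that sense it is correct and matches the intended source.
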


\begin{rem}\label{rem:flow-rule}
Equation \eqref{eq:flow-rule} is a measure-theoretic formulation of the usual flow rule of perfect plasticity. Using the definition \eqref{eq:duality} of duality, it can be equivalently written as an energy balance
\begin{multline*}
\Q(e(t)) + \int_0^t \HH (\dot p(s))\, ds = \Q(e_0) + \int_0^t\int_\O \sigma(s):E\dot w(s)\, dx \, ds\\
+\int_0^t\int_\O f(s)\cdot (\dot u(s)-\dot w(s))\, dx \, ds+\int_0^t\int_{\Gamma_N} g(s)\cdot (\dot u(s)-\dot w(s))\, d\HH^{n-1} \, ds,
\end{multline*}
or equivalently, according to the safe-load condition, 
\begin{multline}\label{eq:energy-eq2}
\Q(e(t)) + \int_0^t \HH (\dot p(s))\, ds-\int_0^t \langle \pi_D(s),\dot p(s)\rangle\, ds+\int_\O \pi(t): (Ew(t)-e(t))\, dx \\
= \Q(e_0)+ \int_\O \pi(0): (Ew(0)-e_0)\, dx + \int_0^t\int_\O \sigma(s):E\dot w(s)\, dx \, ds\\
+\int_0^t \int_\O \dot \pi(s):(Ew(s)-e(s))\, dx \, ds.
\end{multline}
\end{rem}

\section{The rigid-plastic model}
 
In order to derive the rigid-plastic model from elasto-plasticity, we assume that 
\begin{equation}\label{eq:hypCC}
\CC^\e=\e^{-1}\CC, \quad \text{where $\CC$ satisfies \eqref{eq:C1} and \eqref{eq:C2},}
\end{equation}
and $\e \to 0^+$. In addition, we suppose that the boundary data are compatible with rigid plasticity, that is
\begin{equation}\label{eq:divw}
\operatorname{div}w(t)=0 \text{ in }\O,
\end{equation}
and, for simplicity, that the initial data satisfy
\begin{equation}\label{eq:init-cond}
e_0=\sigma_0=0.
\end{equation}

\begin{thm}
\label{thm}
Let $u^\e$, $e^\e$, $p^\e$ and $\sigma^\e$ be the solutions given by Theorem \ref{thm:DMDSM}. There exist a subsequence (not relabeled), and functions $u \in  AC([0,T];BD(\O))$ and $\sigma \in L^2(0,T;L^2(\O;\Ms))$  such that
\begin{eqnarray*}
& u^\e(t) \wto u(t) \text{ weakly* in $BD(\O)$, for all $t\in [0,T]$},\\
& \sigma^\e \wto \sigma \text{ weakly in }L^2(0,T;L^2(\O;\Ms)).
\end{eqnarray*}
Denoting by $v:=\dot u \in L^\infty_{w*}(0,T;BD(\O))$, then for a.e. $t \in [0,T]$, we have
\begin{equation}\label{eq:rigid-plastic}
\begin{cases}
-\operatorname{div}\sigma(t)=f(t) \text{ in }Ê\O,\\
\sigma(t)\nu=g(t) \text{ on }\Gamma_N,\\
\sigma(t) \in \K,
\end{cases}
\qquad
\begin{cases}
\operatorname{div}v(t)=0 \text{ in }\O,\\
(\dot w(t)-v(t))\cdot \nu=0 \text{ on }\Gamma_D,\\
H(Ev(t))=[\sigma_D(t):Ev(t)] \text{ in }\O\cup \Gamma_D.
\end{cases}
\end{equation}
\end{thm}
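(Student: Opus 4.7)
The plan is to use the rescaled energy balance \eqref{eq:energy-eq2} with $\CC^\e=\e^{-1}\CC$, $\Q^\e(e)=\frac1{2\e}\int_\O \CC e:e\,dx\ge\frac\alpha{2\e}\|e\|_2^2$, and $e_0=0$. The assumption $\div w(t)=0$ makes $E\dot w$ deviatoric, so that $\int_\O\sigma^\e:E\dot w$ reduces to $\int_\O(\sigma^\e)_D:E\dot w$, uniformly bounded by $c^*\|E\dot w\|_{L^1}$. The safe-load estimate $H(p)\ge\pi_D:p+c|p|$ on $\MD$ makes $\HH(\dot p^\e)-\langle\pi_D,\dot p^\e\rangle$ bound $c\|\dot p^\e\|_\M$ from below. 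Absorbing the terms linear in $e^\e(t)$ into $\Q^\e(e^\e(t))$ via Young's inequality yields $\|e^\e\|_{L^\infty(0,T;L^2)}\le C\sqrt\e$ (so $e^\e\to 0$ strongly) and $\V_\HH(p^\e;[0,T])\le C$; thus $p^\e(t)$ and $u^\e(t)$ are uniformly bounded in $\M$ and $BD$ respectively (Poincar\'e--Korn). A uniform $L^\infty(0,T;L^2)$-bound on $\sigma^\e$ is then obtained by splitting $\sigma^\e=(\sigma^\e)_D+\frac1n(\operatorname{tr}\sigma^\e)I$: the deviatoric part is $L^\infty$-bounded by $c^*$, and the trace part is controlled in $L^2/\R$ by testing the equilibrium against Bogovskii-type fields $\varphi\in H^1_0(\O;\R^n)$, the missing constant being fixed by an admissible field with $\int_{\Gamma_N}\varphi\cdot\nu\neq 0$.

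\textbf{Step 2: compactness and admissibility.} Weakly extract $\sigma^\e\wto\sigma$ in $L^2(0,T;L^2)$; Helly selection applied to the uniformly $\V_\HH$-bounded paths $p^\e$ yields $p^\e(t)\wto^*p(t)$ in $\M$ for every $t$. Since $e^\e\to 0$ strongly and $Eu^\e=e^\e+p^\e$, one has $u^\e(t)\wto u(t)$ weakly* in $BD$ with $Eu(t)=p(t)$. The equilibrium, the Neumann trace and the convex closed constraint $\sigma_D\in\K$ all pass to $\sigma$ under weak $L^2$-convergence, giving \eqref{eq:rigid-plastic}$_1$. Deviatoricness of $p^\e$ passes to $p$, so $\operatorname{tr}Eu(t)=\div u(t)=0$ and, differentiating in time, $\div v(t)=0$. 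On $\Gamma_D$, $p^\e=(w-u^\e)\odot\nu\in\MD$ forces $(w-u^\e)\cdot\nu=0$, and time-differentiation gives $(\dot w-v)\cdot\nu=0$.

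\textbf{Step 3: flow rule (the main obstacle).} Drop the non-negative term $\Q^\e(e^\e(t))$ from \eqref{eq:energy-eq2} and take $\liminf_\e$. The $e^\e$-linear terms vanish by Step 1; $\int_\O\sigma^\e:E\dot w$ passes to the limit by weak $L^2$-convergence of $(\sigma^\e)_D$; the duality pairing $\int_0^t\langle\pi_D(s),\dot p^\e(s)\rangle\,ds$ is re-expressed, via \eqref{eq:duality} and the equilibrium for $\pi$, as an integral of weakly/strongly convergent quantities and passes to the limit. For the dissipation, Reshetnyak-type lower semicontinuity along $p^\e\to p$, together with the identity $Ev=\dot p$ (obtained by differentiating $Eu=p$), gives
\[\liminf_\e\int_0^t\HH(\dot p^\e(s))\,ds\ge\int_0^t\HH(Ev(s))\,ds.\]
Rearranging and applying \eqref{eq:duality} to $\sigma$ (with $\div\sigma=-f$ and $\sigma\nu=g$) then yields
\[\int_0^t\HH(Ev(s))\,ds\le\int_0^t\langle\sigma_D(s),Ev(s)\rangle\,ds.\]
The reverse measure-wise inequality $[\sigma_D(s):Ev(s)]\le H(Ev(s))$ is immediate from $\sigma_D(s)\in K$ and the definition of $H$ as the support function of $K$; equality of masses then forces equality of measures, which is exactly the flow rule in \eqref{eq:rigid-plastic}. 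The hardest point is precisely this limit passage, since $\dot p^\e$ is controlled only in $L^1(0,T;\M)$: the combination of Helly selection, Reshetnyak lower semicontinuity, and the reformulation \eqref{eq:duality} (which rewrites intrinsic duality pairings as integrals of weakly convergent quantities) is the crucial technical ingredient.
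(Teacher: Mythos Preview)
Your Steps~1 and~2 are sound and essentially match the paper (your pointwise-in-$t$ Bogovskii argument for the hydrostatic pressure is in fact slightly cleaner than the paper's simple-function approximation, and yields the stronger $L^\infty(0,T;L^2)$ bound).

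The gap is in Step~3. You repeatedly invoke $v=\dot u$, $Ev=\dot p$, and the integral $\int_0^t\langle\sigma_D(s),Ev(s)\rangle\,ds$ \emph{before} establishing that $u\in AC([0,T];BD(\O))$. Helly's theorem only gives $p\in BV([0,T];\M)$, so at this stage $\dot p$ and $v$ need not exist; the absolute continuity of $u$ is part of the conclusion, not a hypothesis. In particular, your reverse inequality ``$[\sigma_D(s):Ev(s)]\le H(Ev(s))$'' is a pointwise-in-$s$ statement that presupposes $Ev(s)$ exists, so it cannot be used to close the energy balance.

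The paper avoids this circularity as follows. It first passes to the limit in the energy balance written entirely in terms of the \emph{variation} $\V_\HH(Eu;[0,t])$ and of pairings $\langle\pi_D(t),Eu(t)\rangle$, $\int_0^t\langle\dot\pi_D,Eu\rangle$ (no time derivative of $u$), obtaining one inequality by Reshetnyak. The reverse inequality is then proved by a time-discretization: on each subinterval one uses $\HH(Eu(t_i)-Eu(t_{i-1}))\ge\langle\sigma_D(t_i),Eu(t_i)-Eu(t_{i-1})\rangle$ (valid for \emph{increments}, requiring no differentiability), sums, integrates by parts discretely, and passes to the limit as the mesh vanishes. Only after the resulting energy \emph{equality} is in hand does the paper deduce, via a direct estimate on $\|Eu(t_2)-Eu(t_1)\|_\M$, that $Eu\in AC([0,T];\M)$; then and only then can one differentiate in $t$ to obtain $\HH(E\dot u(t))=\langle\sigma_D(t),E\dot u(t)\rangle$ and the measure-wise flow rule. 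Your sketch is missing precisely this discretization step and the bootstrap from the energy equality to absolute continuity. (The same remark applies to your premature time-differentiation of $(w-u)\cdot\nu=0$ in Step~2.)
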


The remaining of this paper is devoted to the proof of Theorem \ref{thm}.

\begin{rem}
Although $Eu(t)$ is a measure {\it a priori} defined in $\O$, we  tacitly extend it by $(w(t)-u(t))\odot \nu$ on $\Gamma_D$ so that $Eu(t) \in \M(\O \cup \Gamma_D;\MD)$.
\end{rem}

\begin{rem}
In contrast with the framework of classical elasto-plasticity, that of rigid plasticity only involves the velocity field, and not the displacement field itself. As expressed above, time is merely a parameter, although the associated measurability properties of the various fields are obtained through the limit process $\e\searrow 0$ and would be difficult to obtain directly from the limit formulation.
\end{rem}

\subsection{A priori estimates}
In this section all constants are independent of $\e$. We start with an estimate of the stress. 
Since $\sigma^\e_D(t) \in K$ in $\O$, and $K$ is bounded by \eqref{eq:K}, we first deduce that 
\begin{equation}\label{eq:est1}
\sup_{t \in [0,T]}  \|\sigma^\e_D(t)\|_\infty \leq C.
\end{equation}
The following result allows us to bound the hydrostatic stress.

\begin{lem}
There exists a bounded sequence $(c^\e)_{\e>0}$ in $L^2(0,T)$ such that for each $\e>0$,
$$\int_0^T\left\|\frac{\operatorname{tr}\sigma^\e(t)}{n}+c^\e(t)\right\|^2_2\, dt \leq C.$$
\end{lem}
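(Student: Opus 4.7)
The plan is to decompose the stress into its deviatoric and hydrostatic components, $\sigma^\e(t)=\sigma^\e_D(t)+q^\e(t)I$ with scalar pressure $q^\e(t):=\tfrac{1}{n}\operatorname{tr}\sigma^\e(t)$, and to control $q^\e$ up to a spatially-constant shift via a N\v{e}cas-type inequality. The deviator is already uniformly controlled in $L^\infty(\O;\MD)$ by \eqref{eq:est1}, so the task genuinely reduces to estimating the hydrostatic part.

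Inserting the decomposition into the equilibrium equation $-\operatorname{div}\sigma^\e(t)=f(t)$ yields the distributional identity
$$\nabla q^\e(t)=-f(t)-\operatorname{div}\sigma^\e_D(t) \quad \text{in }\O.$$
Since $\sigma^\e_D$ is uniformly bounded in $L^\infty(0,T;L^2(\O;\MD))$ by \eqref{eq:est1}, its divergence sits uniformly in $L^\infty(0,T;H^{-1}(\O;\R^n))$; similarly $f\in AC([0,T];L^n(\O;\R^n))\hookrightarrow L^\infty(0,T;H^{-1}(\O;\R^n))$ for $n\geq 2$. N\v{e}cas' inequality on the Lipschitz domain $\O$ then delivers, for every $t$,
$$\|q^\e(t)-\bar q^\e(t)\|_{L^2(\O)}\leq C\,\|\nabla q^\e(t)\|_{H^{-1}(\O;\R^n)}\leq C,$$
where $\bar q^\e(t):=|\O|^{-1}\int_\O q^\e(t)\,dx$ and $C$ is independent of $\e$ and $t$. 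Setting $c^\e(t):=-\bar q^\e(t)$, the integrand in the claim is then uniformly bounded, so the full $L^2(0,T;L^2(\O))$ integral is at most $CT$.

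It remains to check that $c^\e$ itself lies in a bounded subset of $L^2(0,T)$, equivalently that $\bar q^\e$ does. For this I would use the safe-load $\pi$: the residual $\tau^\e(t):=\sigma^\e(t)-\pi(t)$ satisfies $\operatorname{div}\tau^\e(t)=0$ in $\O$, $\tau^\e(t)\nu=0$ on $\Gamma_N$, and $\tau^\e_D(t)$ remains uniformly bounded in $L^\infty(\O;\MD)$. Fixing once and for all $\psi\in H^1(\O;\R^n)$ with $\psi=0$ on $\Gamma_D$ and $\operatorname{div}\psi=1$ in $\O$ (constructible by a Bogovskii-type argument, provided $\Gamma_N\neq\emptyset$), testing the equilibrium of $\tau^\e(t)$ against $\psi$ and splitting each stress into deviator plus pressure gives
$$\int_\O \bigl(q^\e(t)-\tfrac{1}{n}\operatorname{tr}\pi(t)\bigr)\,dx = -\int_\O \tau^\e_D(t):E\psi\,dx,$$
whose right-hand side is uniformly bounded. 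Since $\pi\in AC([0,T];L^2(\O;\Ms))$, the mean of $\tfrac{1}{n}\operatorname{tr}\pi(t)$ is in $L^\infty(0,T)$, so $\bar q^\e$, and hence $c^\e$, lies uniformly in $L^\infty(0,T)\subset L^2(0,T)$. The main obstacle is the construction of the corrector $\psi$: when $\Gamma_N=\emptyset$ the divergence theorem rules out such a field, and one would have to pin down $\bar q^\e$ through a separate argument based on the constitutive law $\sigma^\e=\e^{-1}\CC e^\e$ together with the constraint \eqref{eq:divw} on $w$.
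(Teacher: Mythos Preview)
Your argument is correct and shares the same backbone as the paper's proof: both control $\nabla(\tfrac{1}{n}\operatorname{tr}\sigma^\e)$ in $H^{-1}$ via the equilibrium equation and the $L^\infty$ bound on $\sigma^\e_D$, and then invoke a Ne\v{c}as--type inequality to control the pressure up to an additive constant. The implementations differ in two places. First, the paper approximates $\sigma^\e$ by $H(\div,\O)$-valued simple functions in order to guarantee that the constant produced by the Ne\v{c}as inequality is measurable in $t$; you sidestep this entirely by taking $c^\e(t)$ to be (minus) the spatial mean of $q^\e(t)$, which is continuous in $t$ since $\sigma^\e\in AC([0,T];L^2(\O;\Ms))$. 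This is cleaner. Second, to bound the constant itself, the paper works directly with the normal trace on $\Gamma_N$: it observes that $\sigma^\e+c^\e I$ is bounded in $L^2(0,T;H(\div,\O))$, hence its normal trace is bounded in $L^2(0,T;H^{-1/2}(\Gamma_N;\R^n))$, and subtracting $\sigma^\e\nu=g$ isolates $c^\e\nu$. Your route instead introduces the safe-load stress $\pi$ and a Bogovski\u{\i}-type corrector $\psi$ with $\operatorname{div}\psi=1$ and $\psi=0$ on $\Gamma_D$, which yields the sharper conclusion $c^\e\in L^\infty(0,T)$ uniformly in~$\e$, at the cost of constructing $\psi$. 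Both arguments tacitly require $\Gamma_N\neq\emptyset$ (the paper needs $\|\nu\|_{H^{-1/2}(\Gamma_N;\R^n)}>0$, you need the corrector), a point you make explicit and the paper leaves implicit.
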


\begin{proof}
Since the mapping $t \mapsto \sigma^\e(t)$ belongs to $L^2(0,T;H(\operatorname{div},\O))$, there is a sequence $(\sigma^\e_k)_{k \in \N}$ of $H(\operatorname{div},\O)$-valued simple functions such that $\sigma^\e_k \to \sigma^\e$ strongly in $L^2(0,T;H(\operatorname{div},\O))$ as $k \to +\infty$. For all $k \in \N$ and all $t \in [0,T]$, we have 
$$\nabla \left(\frac{\operatorname{tr}\sigma^\e_k(t)}{n}\right)=\div\sigma^\e_k(t)-\div(\sigma^\e_k)_D(t) \text{ in } \O$$
which leads to
$$\int_0^T \left\|\nabla \left(\frac{\operatorname{tr}\sigma^\e_k(t)}{n}\right)\right\|^2_{H^{-1}(\O;\R^n)}\, dt \leq \int_0^T \|\div\sigma^\e_k(t)\|_{H^{-1}(\O;\R^n)}^2\, dt +\int_0^T \|(\sigma^\e_k)_D(t)\|^2_2\, dt.$$
Since $\div\sigma^\e_k \to \div \sigma^\e$ in $L^2(0,T;L^2(\O;\R^n))$ and $-\div \sigma^\e=f \in L^2(0,T;L^2(\O;\R^n))$, we deduce that the first integral in the right-hand-side of the previous inequality is uniformly bounded with respect to $\e$ and $k$. The second integral is bounded as well since $(\sigma^\e_k)_D \to \sigma^\e_D$ in  $L^2(0,T;L^2(\O;\MD))$, and $(\sigma^\e_D)_{\e>0}$ is uniformy bounded in that space in view of \eqref{eq:est1}. Consequently, there exists a constant $C>0$ (independent of $k$ and $\e$) such that
$$\int_0^T \left\|\nabla \left(\frac{\operatorname{tr}\sigma^\e_k(t)}{n}\right)\right\|^2_{H^{-1}(\O;\R^n)}\, dt \leq C.$$

Next, according to  \cite[Corollary 2.1]{GR} (see also \cite[Lemma 9]{Tartar} in the case of smooth boundaries), for each $\e>0$, $kÊ\in \N$ and $t \in [0,T]$, there exists some $c_k^\e(t) \in \R$ such that
$$\left\|\frac{\operatorname{tr}\sigma_k^\e(t)}{n}+c_k^\e(t)\right\|_2 \leq C_\O \left\|\nabla \left(\frac{\operatorname{tr}\sigma_k^\e(t)}{n}\right)\right\|_{H^{-1}(\O;\R^n)},$$
for some constant $C_\O >0$ only depending on $\O$. Note that, since the mapping $t \mapsto \operatorname{tr}\sigma_k^\e(t)$ is a simple $L^2(\O)$)-valued function, $t \mapsto c_k^\e(t)$ is a simple real-valued measurable function as well. Additionally,
\begin{equation}\label{eq:1137}
\int_0^T \left\|\frac{\operatorname{tr}\sigma_k^\e(t)}{n}+c_k^\e(t)\right\|^2_2\, dt \leq C,
\end{equation}
where $C>0$ is again independent of $k$ and $\e$. Setting  $\hat \sigma_k^\e:=\sigma_k^\e+c_k^\e\ I$ yields
$$\int_0^T \|\hat \sigma_k^\e(t)\|^2_{H(\div,\O)}\, dt \leq C,$$
and thus, 
$$\int_0^T \|\hat\sigma_k^\e(t)\nu\|^2_{H^{-1/2}(\Gamma_N;\R^n)}\, dtÊ\leq C.$$ 
Using that $\sigma_k^\e\nu\to \sigma^\e\nu=g$ in $L^2(0,T;H^{-1/2}(\Gamma_N;\R^n))$ and that $g \in L^2(0,T;L^2(\Gamma_N;\R^n))$, we obtain  
\begin{multline}\label{eq:1138}
\int_0^T |c_k^\e(t)|^2\, dt \|\nu\|^2_{H^{-1/2}(\Gamma_N;\R^n)}\\
 \leq \int_0^T \|\hat\sigma_k^\e(t)\nu\|^2_{H^{-1/2}(\Gamma_N;\R^n)}\, dt +\int_0^T \|\sigma^\e_k(t)\nu\|^2_{H^{-1/2}(\Gamma_N;\R^n)}\, dt \leq C,
\end{multline}
for some constant $C>0$, independent of $k $ and $\e$. Therefore, the sequence $(c_k^\e)_{k \in \N}$ is bounded in $L^2(0,T)$ and a subsequence converges weakly in that space to some $c^\e \in L^2(0,T)$. Passing to the lower limit in \eqref{eq:1137} implies that 
$$\int_0^T \left\|\frac{\operatorname{tr}\sigma^\e(t)}{n}+c^\e(t)\right\|^2_2\, dt \leq C,$$
while \eqref{eq:1138} shows that $(c^\e)_{\e>0}$ is bounded in $L^2(0,T)$.
\end{proof}

As a consequence of the previous result and of \eqref{eq:est1}, we deduce that 
\begin{equation}\label{eq:est1bisbis}
\int_0^T\|\sigma^\e(t)\|^2_2\, dt \leq C.
\end{equation}

Next, according to the energy balance \eqref{eq:energy-eq2}, \cite[Lemma 3.2]{DMDSM}, assumptions \eqref{eq:divw}--\eqref{eq:init-cond}, and Cauchy-Schwarz inequality, we infer that
\begin{multline*}\frac{1}{2} \int_\O\CC^\e e^\e(t):e^\e(t)\, dx \leq\int_\O \pi(t):(e^\e(t)-Ew(t))\, dx+\int_\O \pi(0):Ew(0)\, dx\\
 +\int_0^t  \int_\O \sigma^\e_D(s):E\dot w(s)\, dx\, ds +\int_0^t\int_\O \dot \pi(s):(E w(s)-e^\e(s))\, dx \, ds\\ 
 \leq C \left(\sup_{t \in [0,T]} \|\pi(t)\|_2+\int_0^T \|\dot \pi(s)\|_2\, ds\right) \left(\sup_{t \in [0,T]}\|e^\e(t)\|_2 +\sup_{t \in [0,T]} \|Ew(t)\|_2 \right)\\
+\sup_{t \in [0,T]} \|\sigma^\e_D(t)\|_\infty \int_0^T \|E\dot w(s)\|_2\, ds,
\end{multline*}
which implies, according to the assumption \eqref{eq:hypCC} on $\CC^\e$ together with Young's inequality, that
\begin{equation}\label{eq:est2}
\sup_{t \in [0,T]} \|e^\e(t)\|_2 \leq C \sqrt \e.
\end{equation}

\medskip 

Using again the energy balance \eqref{eq:energy-eq2}, Cauchy-Schwarz inequality and \eqref{eq:est2}, we find that
\begin{multline*}
\int_0^t \HH (\dot p^\e(s))\, ds -\int_0^t \langle \pi_D(s),\dot p^\e(s)\rangle\, ds\leq  \int_\O \pi(t):(e^\e(t)-Ew(t))\, dx+\int_\O \pi(0):Ew(0)\, dx\\
 +\int_0^t  \int_\O \sigma^\e_D(s):E\dot w(s)\, dx\, ds +\int_0^t\int_\O \dot \pi(s):(E w(s)-e^\e(s))\, dx \, ds\leq C.
\end{multline*}
Applying \cite[Lemma 3.2]{DMDSM} again yields
\begin{equation}\label{eq:est3}
\int_0^T \|\dot p^\e(s)\|_{\M(\O\cupÊ\Gamma_D;\MD)}\, ds \leq C,
\end{equation}
and thus 
\begin{equation}\label{eq:est3bis}
\sup_{t \in [0,T]} \|p^\e(t)\|_{\M(\O\cup \Gamma_D;\MD)} \leq C.
\end{equation}
\medskip

For the displacement, Poincar\'e-Korn's inequality (see \cite[Chap. 2, Rmk. 2.5(ii)]{T}) yields
\begin{eqnarray}\label{eq:est4}
\|u^\e(t)\|_{BD(\O)} & \leq & c \left(\int_{\Gamma_D} |u^\e(t)|\, d\HH^{n-1} + \|Eu^\e(t)\|_{\M(\O;\Ms)}\right)\nonumber\\
& \leq & c \left(\int_{\Gamma_D} |w(t)|\, d\HH^{n-1}+ \int_{\Gamma_D} |u^\e(t)-w(t)|\, d\HH^{n-1}+ \|Eu^\e(t)\|_{\M(\O;\Ms)}\right)\nonumber\\
& \leq & c\left(\|w(t)\|_{L^1(\Gamma_D;\R^n)}+\|p^\e(t)\|_{\M(\O\cup\Gamma_D;\MD)}+\|e^\e(t)\|_2\right)\leq C,
\end{eqnarray}
where we have used \eqref{eq:est2} and \eqref{eq:est3bis} in the last inequality.

\subsection{Convergences}

According to the stress estimate \eqref{eq:est1bisbis}, there exist a subsequence (not relabeled) and $\sigma \in L^2(0,T;L^2(\O;\Ms))$ such that 
\begin{equation}\label{eq:conv-sigma}
\sigma^\e \wto  \sigma \text{ weakly in $L^2(0,T;L^2(\O;\Ms))$}.
\end{equation}
Consequently, since for all $t \in [0,T]$, we have $-\operatorname{div}\sigma^\e(t)=f(t)$ in $\O$ and $\sigma^\e(t)\nu=g(t)$ on $\Gamma_N$, we infer that  for a.e. $t \in [0,T]$,
$$-\operatorname{div}Ê\sigma(t)=f(t) \text{ in }\O,\quad \sigma(t) \nu=g(t) \text{ on }Ê\Gamma_N.$$
In addition, since $\sigma^\e_D(t) \in \K$ for all $t \in [0,T]$, then
$$\sigma_D(t) \in \K  \text{ for a.e. }t \in [0,T].$$

\medskip

We then apply Helly's selection principle (see \cite[Theorem 3.2]{MM}) which ensures, thanks to \eqref{eq:est3}, the existence of a further subsequence (independent of time and still not relabeled) such that
\begin{equation}\label{eq:conv-p}
p^\e(t) \wto p(t) \text{ weakly* in $\M(\O \cup \Gamma_D;\MD)$, for all $t\in [0,T]$,}
\end{equation}
for some $p \in BV([0,T];\M(\O\cup \Gamma_D;\MD))$. 

\medskip 

Next according to \eqref{eq:est2}, we have that 
\begin{equation}\label{eq:conv-e}
e^\e \to 0\text{ strongly in }L^\infty(0,T;L^2(\O;\Ms)).
\end{equation}

\medskip

Finally, as a consequence of the displacement estimate \eqref{eq:est4}, for each $t \in [0,T]$, there exists a further subsequence $(u^{\e_j}(t))_{j \in \N}$ (now possibly depending on $t$) such that $u^{\e_j}(t) \wto u(t)$ weakly* in $BD(\O)$, for some $u(t) \in BD(\O)$. Note that by \eqref{eq:conv-p}--\eqref{eq:conv-e}, for a.e. $t \in [0,T]$, one has $Eu(t)=p(t)$ in $\O$ and, by \cite[Lemma 2.1]{DMDSM}, $p(t)=(w(t)-u(t))\odot \nu$ on $\Gamma_D$ which shows that $u(t)$ is uniquely determined, and thus that the full sequence 
\begin{equation}\label{eq:conv-u}
u^\e(t) \wto u(t) \text{ weakly* in $BD(\O)$, for all $t\in [0,T]$}.
\end{equation}
In particular, since $Eu(t)=p(t) \in \M(\O \cup \Gamma_D;\MD)$, we also deduce that
\begin{equation}\label{eq:disp}
\operatorname{div} u(t)=0 \text{ in }Ê\O, \quad (w(t)-u(t))\cdot \nu=0 \text{ on }\Gamma_D.
\end{equation}

\subsection{Flow rule}

According to the energy balance \eqref{eq:energy-eq2} and the fact that the plastic strain $p^\e \in AC([0,T];\M(\overline \O;\MD))$, we can integrate by parts in time, so that for all $t \in [0,T]$,
\begin{multline*}
\V_\HH(p^\e;[0,t])+\int_\O \pi(t): (Ew(t)-e^\e(t))\, dx-\langle \pi_D(t),p^\e(t)\rangle \\
\leq \int_\O \pi(0): Ew(0)\, dx -\langle \pi_D(0),p_0\rangle+ \int_0^t\int_\O \sigma_D^\e(s):E\dot w(s)\, dx \, ds\\
+\int_0^t \int_\O \dot \pi(s):(Ew(s)-e^\e(s))\, dx \, ds-\int_0^t\langle \dot \pi_D(s),p^\e(s)\rangle\, ds.
\end{multline*}
Since by \eqref{eq:conv-p}--\eqref{eq:conv-u} $p^\e(t) \wto Eu(t)$ weakly* in $\M(\O\cup \Gamma_D;\MD)$ for a.e. $t \in [0,T]$, Reshetnyak lower semicontinuity theorem, \eqref{eq:conv-sigma}, \eqref{eq:conv-e}, \eqref{eq:conv-u} and the definition \eqref{eq:duality} of duality ensures that
\begin{multline}\label{eq:ineq1}
\V_\HH(Eu;[0,t])+\int_\O \pi(t): Ew(t)\, dx-\langle \pi_D(t),Eu(t)\rangle \\
\leq \int_\O \pi(0): Ew(0)\, dx -\langle \pi_D(0),Eu_0\rangle+ \int_0^t\int_\O \sigma_D(s):E\dot w(s)\, dx \, ds\\
+\int_0^t \int_\O \dot \pi(s):Ew(s)\, dx \, ds-\int_0^t\langle \dot \pi_D(s),Eu(s)\rangle\, ds.
\end{multline}

\medskip

We now show the converse inequality.  Since   $\sigma_D \in L^1(0,T;L^2(\O;\MD))$, while $u-w \in L^1(0,T;L^{\frac{n}{n-1}}(\O;\R^n))$, and $u-w \in L^1(0,T;L^1(\Gamma_N;\R^n))$, \cite[Lemma 7.5]{DMDSS} implies the existence of a subdivision $0=t_0<t_1<\cdots <t_k=t$ of the time interval $[0,t]$ such that
$$ \sum_{i=1}^k \chi_{[t_{i-1},t_i[} (\sigma_D(t_i), u(t_{i})-w(t_{i}), u(t_{i})-w(t_{i}))\to (\sigma_D, u-w, u-w)
$$
and
$$ \sum_{i=1}^k \chi_{[t_{i-1},t_i[} (\sigma_D(t_{i-1}), u(t_{i-1})-w(t_{i-1}), u(t_{i-1})-w(t_{i-1}))\to (\sigma_D, u-w, u-w)
$$
strongly in $L^1(0,T;L^2(\O;\MD)) \times L^1(0,T;L^{\frac{n}{n-1}}(\O;\R^n)) \times L^1(0,T;L^1(\Gamma_N;\R^n))$, as $\max_{1 \leq i \leq k}(t_{i}-t_{i-1}) \to 0$. According to Proposition 3.9 in \cite{FG} and to the fact that $\O$ is admissible, we infer that for each $1 \leq i \leq k$,
\begin{multline*}
\HH(Eu(t_i)-Eu(t_{i-1})) \geq \langle \sigma_D(t_i),Eu(t_i)-Eu(t_{i-1})\rangle\\
 = \int_\O \sigma_D(t_i):(Ew(t_i)-Ew(t_{i-1}))\, dx + \int_\O  f(t_i) \cdot (u(t_i)-u(t_{i-1})- w(t_i)+w(t_{i-1}))\, dx \\
 + \int_{\Gamma_N}g(t_i) \cdot  (u(t_i)-u(t_{i-1})- w(t_i)+w(t_{i-1}))\, d\HH^{n-1}.
 \end{multline*}
 Summing up for $i=1,\ldots,k$, and performing discrete integration by parts yields
\begin{multline*}
\V_\HH(Eu,[0,t]) \geq \sum_{i=1}^k \int_{t_{i-1}}^{t_i}\int_\O\sigma_D(t_i):E\dot w(s)\, dx \, ds\\
- \sum_{i=1}^{k-1} \int_{t_{i}}^{t_{i+1}}\int_\O \dot f(s)\cdot (u(t_i)-w(t_i))\, dx \, ds - \sum_{i=1}^{k-1} \int_{t_{i}}^{t_{i+1}} \int_{\Gamma_N} \dot g(s)\cdot (u(t_i)-w(t_i))\, d\HH^{n-1} \, ds\\
+\int_\O f(t)\cdot (u(t)-w(t))\, dx+\int_{\Gamma_N}  g(t)\cdot (u(t)-w(t))\, d\HH^{n-1}\\
-\int_\O f(t_1)\cdot (u_0-w(0))\, dx-\int_{\Gamma_N}  g(t_1)\cdot (u_0-w(0))\, d\HH^{n-1}.
 \end{multline*}
Passing to the limit as $\max_{1 \leq i \leq k}(t_{i}-t_{i-1}) \to 0$, and invoking the dominated convergence theorem yields
\begin{multline*}
\V_\HH(Eu,[0,t]) \geq \int_0^{t}\int_\O\sigma_D(s):E\dot w(s)\, dx \, ds\\
-\int_0^{t}\int_\O \dot f(s)\cdot (u(s)-w(s))\, dx \, ds - \int_0^{t}\int_{\Gamma_N} \dot g(s)\cdot (u(s)-w(s))\, d\HH^{n-1} \, ds\\
+\int_\O f(t)\cdot (u(t)-w(t))\, dx+\int_{\Gamma_N}  g(t)\cdot (u(t)-w(t))\, d\HH^{n-1}\\
-\int_\O f(0)\cdot (u_0-w(0))\, dx-\int_{\Gamma_N}  g(0)\cdot (u_0-w(0))\, d\HH^{n-1},
 \end{multline*}
 and using the definition \eqref{eq:duality} of duality
 \begin{multline*}
\V_\HH(Eu;[0,t])+\int_\O \pi(t): Ew(t)\, dx-\langle \pi_D(t),Eu(t)\rangle \\
\geq \int_\O \pi(0): Ew(0)\, dx -\langle \pi_D(0),Eu_0\rangle+ \int_0^t\int_\O \sigma_D(s):E\dot w(s)\, dx \, ds\\
+\int_0^t \int_\O \dot \pi(s):Ew(s)\, dx \, ds-\int_0^t\langle \dot \pi_D(s),Eu(s)\rangle\, ds.
\end{multline*}
Thus, combining with \eqref{eq:ineq1} leads to the equality in the previous inequality,
or still, integrating by parts with respect to time
\begin{multline}\label{eq:ee}
\V_\HH(Eu;[0,t]) = \langle \pi_D(t),Eu(t)\rangle-\langle \pi_D(0),Eu_0\rangle\\
+\int_0^t \int_\O (\sigma_D(s)-\pi_D(s)):E\dot w(s)\, dx \, ds-\int_0^t\langle \dot \pi_D(s),Eu(s)\rangle\, ds.
 \end{multline}

According to \cite[Lemma 3.2]{DMDSM}, for all $0 \leq t_1 \leq t_2 \leq T$,  
\begin{eqnarray*}
c\|Eu(t_2)-Eu(t_1)\|_{\M(\O \cup \Gamma_D;\MD)} & \leq & \HH(Eu(t_2)-Eu(t_1))-\langle \pi_D(t_2),Eu(t_2)-Eu(t_1)\rangle \\
& \leq & \V_\HH(Eu,[t_1,t_2])-\langle \pi_D(t_2),Eu(t_2)-Eu(t_1)\rangle .
\end{eqnarray*}
In view of \eqref{eq:ee}, we get that
\begin{multline*}
c\|Eu(t_2)-Eu(t_1)\|_{\M(\O \cup \Gamma_D;\MD)} \leq\langle \pi_D(t_2)-\pi_D(t_1),Eu(t_1)\rangle\\
+\int_{t_1}^{t_2} \int_\O (\sigma_D(s)-\pi_D(s)):E\dot w(s)\, dx \, ds-\int_{t_1}^{t_2}\langle \dot \pi_D(s),Eu(s)\rangle\, ds.
\end{multline*}
Since $Eu=p$ and $p \in BV([0,T];\M(\O \cup \Gamma_D;\MD))$, we get that
$Eu \in L^\infty_{w*}(0,T;\M(\O\cup \Gamma_D;\MD))$, and thus
\begin{multline*}
c\|Eu(t_2)-Eu(t_1)\|_{\M(\O \cup \Gamma_D;\MD)} \leq \int_{t_1}^{t_2}  \Big\{\|Eu(t_1)\|_{\M(\O \cup \Gamma_D;\MD)} \|\dot \pi_D(s)\|_\infty\\
 +(\|\pi_D(s)\|_2 + \|\sigma_D(s)\|_2)\|E\dot w(s)\|_2 + \|\dot \pi_D(s)\|_\infty \|Eu(s)\|_{\M(\O \cup \Gamma_D;\MD)} \Big\}\, ds.
 \end{multline*}
The integrand being sommable, it ensures that the strain $Eu \in AC([0,T];\M(\O\cup \Gamma_D;\MD))$, and by the Poincar\'e-Korn inequality that $u \in AC([0,T];BD(\O))$. Thus, integrating by part with respect to time and space in the energy equality \eqref{eq:ee},
\begin{multline*}\int_0^t \HH(E\dot u(s))\, ds= \V_\HH(Eu,[0,t]) =  \int_0^{t}\int_\O\sigma_D(s):E\dot w(s)\, dx \, ds\\
+\int_0^{t}\int_\O  f(s)\cdot (\dot u(s)-\dot w(s))\, dx \, ds + \int_0^{t}\int_{\Gamma_N}  g(s)\cdot (\dot u(s)-\dot w(s))\, d\HH^{n-1} \, ds,
\end{multline*}
and deriving this equality with respect to time yields, thanks to \eqref{eq:duality}, for a.e. $t \in [0,T]$,
$$\HH(E\dot u(t))=\langle\sigma_D(t),E\dot u(t)\rangle.$$
Since, by \cite[Proposition 3.9]{FG},  $H(E\dot u(t))Ê\geq [\sigma_D(t):E\dot u(t)]$ in $\M(\O\cupÊ\Gamma_D)$, we finally deduce that $H(E\dot u(t))Ê= [\sigma_D(t):E\dot u(t)]$ in $\M(\O\cup \Gamma_D)$. 

Denoting by $v=\dot u$ the velocity, we proved that $v \in L^\infty_{w*}(0,T;BD(\O))$, and recalling \eqref{eq:disp}, we have for a.e. $t \in [0,T]$,
$$\operatorname{div}v(t)=0 \text{ in }\O, \quad (\dot w(t)-v(t))\cdot \nu=0 \text{ on }\Gamma_D,$$
and
$$H(Ev(t))=[\sigma_D(t):Ev(t)] \text{ in }\O\cupÊ\Gamma_D.$$

\section{Uniqueness and regularity issues for the stress with a Von Mises yield criterion}

We now specialize to the case where  $K:=\{\tau_D \in \MD: |\tau_D|Ê\leq 1\}$. In such a setting, it is known (see \cite{BF})  when elasto-plasticity is considered the stress field is unique and belongs to $H^1_{\rm loc}(\O;\Ms)$. These properties fail in the case of rigid-plasticity as demonstrated below.
\begin{example}
Let us  consider a two-dimensional body occupying the square $\O=(0,1)^2$ in its reference configuration (the generalization to the $n$-dimensional case is obvious).  We also assume that the boundary conditions are of pure Dirichlet type with  a rigid body motion $\dot w(x)=Ax+b$  (where $A \in \mathbb M^{n \times n}$ is such that $A^T=-A^T$, and $b \in \R^n$) as boundary datum.

Then, defining $v(x)=Ax+b$ for all $x \in \Omega$ ensures that $Ev=0$ in $\O$. In particular, all equations on $v$ are satisfied. Now define the stress as 
$$\sigma(x)=\begin{pmatrix}
f(x_2) & c\\
c & g(x_1)
\end{pmatrix}$$
where $c \in \R$, $f$, $g \in L^\infty(0,1)$ so that $\operatorname{div}\sigma=0$ in $\O$. In particular
$$\sigma_D(x)=\begin{pmatrix}
\frac{f(x_2)-g(x_1)}{2} & c\\
c & \frac{g(x_1)-f(x_2)}{2}
\end{pmatrix}$$
and $|\sigma_D(x)|^2\leq 2c^2 + |f(x_2)|^2+|g(x_1)|^2$ for a.e. $x \in \O$. Assuming that $\sqrt{2c^2+\|f\|_\infty^2+\|g\|_\infty^2}<1/2$, we deduce that the one parameter family $\sigma^\lambda:=\lambda \sigma$ still satisfies $\operatorname{div}\sigma^\lambda=0$ and $|\sigma^\lambda_D|<1$ in $\O$ provided that $|\lambda| \leq 2$.
\end{example}

In general, a certain amount of uniqueness holds true as shown below. It uses a notion of precise representative for the stress field first introduced in \cite{A} (see also \cite{DMDSM}).

\begin{prop}
Let $(\sigma^1,v^1)$, $(\sigma^2,v^2) \in L^2(\O;\Ms) \times BD(\O)$ be two solutions of the rigid-plastic model \eqref{eq:rigid-plastic} at a given time $t=t_0$. Then,
\begin{itemize}
\item There exist two $|Ev^1|$-measurable functions $\hat \sigma^1_D$ and $\hat \sigma^2_D \in L^\infty_{|Ev^1|}(\O \cup \Gamma_D;\MD)$ such that
$\hat \sigma^1_D=\sigma^1$ and  $\hat \sigma^2_D=\sigma^2_D$ $\LL^n$-a.e. in $\O\cup\Gamma_D$,
and
$$\hat \sigma^1_D=\hat \sigma^2_D \quad |Ev^1|\text{-a.e. in }\O\cup\Gamma_D;$$
\item There exist two $|Ev^2|$-measurable functions $\tilde \sigma^1_D$ and $\tilde \sigma^2_D \in L^\infty_{|Ev^2|}(\O \cup \Gamma_D;\MD)$ such that
$\tilde \sigma^1_D=\sigma^1$ and $\tilde \sigma^2_D=\sigma^2_D$ $\LL^n$-a.e. in $\O \cup \Gamma_D$, and
$$\tilde \sigma^1_D=\tilde \sigma^2_D \quad |Ev^2|\text{-a.e. in }\O\cup\Gamma_D.$$
\end{itemize}
\end{prop}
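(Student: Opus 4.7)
The plan is to exploit the Von Mises property $H(p)=|p|$, together with the flow rule and Hill's maximum principle, to pin down the stresses on the support of the plastic strain. I first apply the duality formula \eqref{eq:duality} with $\hat w=\dot w(t_0)$, $e=0$, $u=v^i$ and $p=Ev^j$ (extended by $(\dot w-v^j)\odot\nu$ on $\Gamma_D$), for the statically admissible stresses $\sigma^1$ and $\sigma^2$. Since $\operatorname{div}\dot w=0$ makes $E\dot w$ deviatoric, the bulk term $\int_\O \sigma^i:E\dot w\,dx$ equals $\int_\O \sigma^i_D:E\dot w\,dx$, so the pairing $\langle \sigma^i_D,Ev^j\rangle$ depends on $\sigma^i$ only through $\sigma^i_D$. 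Setting $\Delta:=\int_\O(\sigma^1-\sigma^2)_D:E\dot w\,dx$, one obtains
$$\langle\sigma^2_D,Ev^1\rangle=\langle\sigma^1_D,Ev^1\rangle-\Delta, \qquad \langle\sigma^1_D,Ev^2\rangle=\langle\sigma^2_D,Ev^2\rangle+\Delta.$$

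Next, the flow rule specialised to $H=|\cdot|$ reads $[\sigma^i_D:Ev^i]=|Ev^i|$, so $\langle\sigma^i_D,Ev^i\rangle=|Ev^i|(\O\cup\Gamma_D)$. On the other hand, Definition \ref{admiss} together with $\|\sigma^i_D\|_\infty\leq 1$ gives $|[\sigma^i_D:Ev^j]|\leq|Ev^j|$, and in particular $\langle\sigma^i_D,Ev^j\rangle\leq|Ev^j|(\O\cup\Gamma_D)$. Applied to the two cross pairings above, this forces simultaneously $\Delta\geq 0$ and $\Delta\leq 0$, hence $\Delta=0$. Therefore $[\sigma^2_D:Ev^1]$ is a signed Radon measure of total mass $|Ev^1|(\O\cup\Gamma_D)$, dominated in variation by $|Ev^1|$, and consequently $[\sigma^2_D:Ev^1]=|Ev^1|$ as measures on $\O\cup\Gamma_D$.

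It remains to pass from this measure-theoretic identity to a pointwise equality of precise representatives. I invoke the theory of \cite{A} (see also \cite{DMDSM}) which, given an admissible stress $\sigma$ with $\sigma_D\in L^\infty$ and a measure $p\in\M(\O\cup\Gamma_D;\MD)$ arising as the plastic strain of some admissible triple, yields a $|p|$-measurable representative $\hat\sigma_D\in L^\infty_{|p|}(\O\cup\Gamma_D;\MD)$ of $\sigma_D$, with $\hat\sigma_D=\sigma_D$ $\LL^n$-a.e., $|\hat\sigma_D|\leq\|\sigma_D\|_\infty$ $|p|$-a.e., and $[\sigma_D:p]=(\hat\sigma_D:M_p)\,|p|$, where $M_p=dp/d|p|$. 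Applying this with $p=Ev^1$ to both $\sigma^1$ and $\sigma^2$, the identities $[\sigma^i_D:Ev^1]=|Ev^1|$ translate into $\hat\sigma^i_D:M_1=1$ holding $|Ev^1|$-a.e., where $|M_1|=1$ $|Ev^1|$-a.e.\ by the polar decomposition of a $\MD$-valued Radon measure. Since $|\hat\sigma^i_D|\leq 1$, the equality case in Cauchy-Schwarz forces $\hat\sigma^1_D=\hat\sigma^2_D=M_1$ $|Ev^1|$-a.e., which is the first bullet.

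The second bullet is proved by exchanging the roles of $v^1$ and $v^2$: the same $\Delta=0$ yields $[\sigma^1_D:Ev^2]=|Ev^2|$, and the theory of precise representatives is now applied to $p=Ev^2$. The main obstacle is the last step, namely the construction of a $|p|$-measurable representative of $\sigma_D$ whose contraction with $dp/d|p|$ produces the Radon-Nikodym derivative of $[\sigma_D:p]$ with respect to $|p|$; this is delicate because $|p|$ may be singular with respect to $\LL^n$, so the Lebesgue representative is inadequate, and one must rely on the rectifiable structure of the singular part of $p$ and on normal traces of $\sigma$ across rectifiable sets, as developed in \cite{A, DMDSM}.
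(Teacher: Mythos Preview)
Your proof is correct and follows essentially the same route as the paper's: first establish $[\sigma^2_D:Ev^1]=|Ev^1|$ via the duality identity (the paper uses the equivalent observation that the nonnegative measure $[(\sigma^1_D-\sigma^2_D):(Ev^1-Ev^2)]$ has zero total mass, while you compute your scalar $\Delta=0$), and then pass to precise representatives and invoke the equality case in Cauchy--Schwarz with $|\hat\sigma^i_D|\le 1$ and $|dEv^1/d|Ev^1||=1$. The only differences are cosmetic: the paper constructs the representatives explicitly by setting $\hat\sigma^i_D:=dEv^1/d|Ev^1|$ on the support of the singular part (rather than appealing to the abstract theory of \cite{A,DMDSM}), and there is a small slip in your setup---the admissible triple in \eqref{eq:duality} must have $u=v^j$ when $p=Ev^j$, not $u=v^i$.
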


\begin{proof}
Since  $(\sigma^1,v^1)$, $(\sigma^2,v^2)$ are two solutions of the rigid-plastic model \eqref{eq:rigid-plastic}, the following inequalities in $\M(\O \cup \Gamma_D)$ hold true
$$[\sigma^1_D:Ev^1]  = |Ev^1|Ê\geq [\sigma^2_D:Ev^1],\quad 
[\sigma^2_D:Ev^2]= |Ev^2|Ê\geq [\sigma^1_D:Ev^2].$$
As a consequence,
$$[(\sigma^1_D-\sigma^2_D):Ev^1] \geq 0,\quad [(\sigma^2_D-\sigma^1_D):Ev^2] \geq 0,$$
and thus, 
$$[(\sigma^1_D-\sigma^2_D):(Ev^1-Ev^2)] \geq 0.$$
In addition, by definition \eqref{eq:duality} of the duality pairing, the total mass of the measure on the left-hand side of the previous inequality is given by
$$\langle \sigma^1_D-\sigma^2_D,Ev^1-Ev^2\rangle=0.$$
It thus follows that
$$[(\sigma^1_D-\sigma^2_D):Ev^1] = 0,\quad [(\sigma^2_D-\sigma^1_D):Ev^2] = 0,$$
or still that
\begin{equation}\label{eq:mtfl}
[\sigma^1_D:Ev^1]  = |Ev^1|Ê= [\sigma^2_D:Ev^1],\quad 
[\sigma^2_D:Ev^2]= |Ev^2|Ê= [\sigma^1_D:Ev^2].
\end{equation}

Arguing as in \cite{DMDSM}, since $\LL^n$ and $E^sv^1$ are mutually singular Borel measures, it is possible to find two disjoint Borel sets $A$ and $B \subset \O \cup \Gamma_D$ such that $A \cup B=\O \cup \Gamma_D$, and $\LL^n(B)=|E^s v^1|(A)=0$. Then, defining (for $i=1,2$)
$$\hat\sigma^i_D:=\left\{\begin{array}{cl}
\sigma^i_D Ê& \LL^n\text{-a.e. in }A,\\[0.2cm]
\displaystyle \frac{dEv^1}{d|Ev^1|}& |E^s v^1|\text{-a.e. in }B,
\end{array}\right.$$
it follows that
$\hat \sigma^1_D$ and $\hat \sigma^2_D \in L^\infty_{|Ev^1|}(\O \cup \Gamma_D;\MD)$, and
$$\hat  \sigma^1_D:\frac{dEv^1}{d|Ev^1|}|Ev^1|=[\sigma^1_D:Ev^1]=|Ev^1|=[\sigma^2_D:Ev^1]= \hat \sigma^2_D:\frac{dEv^1}{d|Ev^1|}|Ev^1|.$$
By definition, we have that $\hat  \sigma^1_D=\hat  \sigma^2_D$ $|E^s v^1|$-a.e. in $\O \cup \Gamma_D$. 
In addition, taking the absolutely continuous part in \eqref{eq:mtfl} yields  (see \cite{DMDSM,FG}),
 $$ \sigma^1_D: E^av^1Ê=[\sigma^1_D:Ev^1]^a=|E^av^1|=[\sigma^2_D:Ev^1]^a= \sigma^2_D:E^a v^1.$$
Thus $\sigma^1_D=\sigma^2_D$ $\LL^n$-a.e. in $\{|E^av^1|>0\}$ and finally $\hat \sigma^1_D=\hat \sigma^2_D$ $|Ev^1|$-a.e. in $\O\cupÊ\Gamma_D$ as requested. 
\end{proof}

\section*{Acknowledgements} 

The authors wish to thank the hospitality of the Courant Institute of Mathematical Sciences at New York University where a large part of this work has been carried out.

\end{document}